\newtheorem{theorem}{Theorem}[section]
\newtheorem{lemma}{Lemma}[section]
\newtheorem{remark}{Remark}[section]
\newtheorem{corollary}{Corollary}[section]
\newtheorem{proposition}{Proposition}[section]
\numberwithin{equation}{section}
\begin{document}
	
\title{New estimates for the numerical radius}
\author{Hamid Reza Moradi and Mohammad Sababheh}
\subjclass[2010]{Primary 47A12, 47A30, Secondary 15A60, 47B15.}
\keywords{Numerical radius, operator norm, accretive-dissipative operator.}

\begin{abstract}
In this article, we present new inequalities for the numerical radius of the sum of two Hilbert space operators. These new inequalities will enable us to obtain many generalizations and refinements of some well known inequalities, including multiplicative behavior of the numerical radius and norm bounds.

Among many other applications, it is shown that if $T$ is  accretive-dissipative, then
\[\frac{1}{\sqrt{2}}\left\| T \right\|\le \omega \left( T \right),\]
where $\omega \left( \cdot \right)$ and $\left\| \cdot \right\|$ denote the numerical radius and the usual operator norm, respectively. This inequality provides a considerable refinement of the well known inequality $\frac{1}{2}\|T\|\leq \omega(T).$ 
\end{abstract}
\maketitle
%------------------------------------------------------------------------------------%
\pagestyle{myheadings}
\markboth{\centerline {}}
{\centerline {}}
\bigskip
\bigskip
%------------------------------------------------------------------------------------%
%------------------------------------------------------------------------------------%

\section{Introduction}
Let $\mathcal{H}$ be a complex Hilbert space, endowed with the inner product $\left<\cdot,\cdot\right>:\mathcal{H}\times \mathcal{H}\to\mathbb{C}.$ The $C^*$-algebra of all bounded linear operators on $\mathcal{H}$ will be denoted by $\mathbb{B}(\mathcal{H}).$ An operator $A\in\mathbb{B}(\mathcal{H})$ is said to be positive (denoted by $A>0$) if $\left<Ax,x\right> > 0$ for all non-zero vectors $x\in\mathcal{H}$, and self-adjoint if $A^*=A$, where $A^*$ is the adjoint of $A$. For $A\in\mathbb{B}(\mathcal{H})$, the Cartesian decomposition of $A$ is $A=\Re A+i \Im A$, where 
$$\Re A=\frac{A+A^*}{2}\;{\text{and}}\;\Im A=\frac{A-A^*}{2i},$$ are the real and imaginary parts of $A$, respectively. It is clear that both $\Re A$ and $\Im A$ are self-adjoint. If $\Re A>0$, the operator $A$ will be called accretive, and if both $\Re A$ and $\Im A $ are positive, $A$ will be called accretive-dissipative.

Among the most interesting scalar quantities associated with an operator $A\in\mathbb{B}(\mathcal{H})$ are the the usual operator norm and the numerical radius, defined respectively by
$$\|A\|=\sup_{\|x\|=1}\|Ax\|\;{\text{and}}\;\omega(A)=\sup_{\|x\|=1}\left|\left<Ax,x\right>\right|.$$
It is well known that $\|A\|=\sup_{\|x\|=\|y\|=1}\left|\left<Ax,y\right>\right|$. The operator norm and the numerical radius are always comparable by the inequality
\begin{align}\label{eq_intro_1}
\frac{1}{2}\|A\|\leq \omega(A)\leq \|A\|, A\in\mathbb{B}(\mathcal{H}).
\end{align}
When $A\in\mathbb{B}(\mathcal{H})$ is normal (i.e., $AA^*=A^*A$), we have $\|A\|=\omega(A)$.

The significance of \eqref{eq_intro_1} is having upper and lower bounds of $\omega(A)$; a quantity that is not easy to calculate. Due to the importance and applicability of the quantity $\omega(A)$, interest has grown in having better bounds of $\omega(A)$ than the bounds in \eqref{eq_intro_1}. 

The main goal of this article is to present new inequalities for the numerical radius. More precisely, we study inequalities for quantities of the forms $$\omega(A+B)\;{\text{and}}\;\omega(A+iB),\;A,B\in\mathbb{B}(\mathcal{H}).$$

Our approach will be based mainly on the scalar inequality
\begin{equation}\label{1}
\left| a+b \right|\le \sqrt{2}\left| a+ib \right|,
\end{equation}
valid for $a,b\in\mathbb{R}.$ Using this inequality is a new approach to tickle numerical radius inequalities. This approach will enable us to obtain new bounds, some of which are generalizations of certain known bounds. For example, we will refine Kittaneh inequality  \cite{1}, 
\begin{equation}\label{kitt_ineq_intro}
\frac{1}{4}\left\| {{\left| A \right|}^{2}}+{{\left| {{A}^{*}} \right|}^{2}} \right\|\le {{\omega }^{2}}\left( A \right)\le \frac{1}{2}\left\| {{\left| A \right|}^{2}}+{{\left| {{A}^{*}} \right|}^{2}} \right\|;
\end{equation}
noting that the inequalities \eqref{kitt_ineq_intro} refine the inequalities \eqref{eq_intro_1}.

In fact, our approach will not enable us to refine these inequalities only, it will present a new proof and generalization of \eqref{kitt_ineq_intro}.

Moreover, using this approach, we will show that
$$\frac{1}{\sqrt{2}}\|A\|\leq \omega(A)$$ for the accretive-dissipative operator $A$. This inequality presents a considerable improvement of the first inequality in \eqref{eq_intro_1}. As a result, we will be able to introduce a better bound for the sub-multiplicative behavior of the numerical radius, when dealing with accretive-dissipative operators. More precisely, we show that $\omega(ST)\leq 2\omega(S)\omega(T)$ when both $S$ and $T$ are accretive-dissipative. See Corollary \ref{cor_submult} below for further discussion.

Other results including  reverses of \eqref{kitt_ineq_intro}  and a refinement of the triangle inequality will be shown too.

In our proofs, we will need to recall the following inequalities.
 \begin{lemma}\label{lemma1}
\cite[pp. 75--76]{2} Let $A\in\mathbb{B}(\mathcal{H})$ and let $x\in\mathcal{H}$. Then 
$$|\left<Ax,x\right>|\leq\sqrt{\left<|A|x,x\right>\left<|A^*|x,x\right>}.$$
\end{lemma}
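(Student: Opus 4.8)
The plan is to prove this mixed Schwarz inequality via the polar decomposition of $A$ combined with the ordinary Cauchy--Schwarz inequality. Write $A=U|A|$, where $|A|=(A^*A)^{1/2}$ and $U$ is the partial isometry from the polar decomposition, with initial space $\overline{\mathrm{ran}\,|A|}$ and final space $\overline{\mathrm{ran}\,A}$. Since $|A|=|A|^{1/2}|A|^{1/2}$, for any $x\in\mathcal{H}$ I would rewrite, using the self-adjointness of $|A|^{1/2}$ to transfer one factor across the inner product,
$$\langle Ax,x\rangle=\langle U|A|^{1/2}|A|^{1/2}x,x\rangle=\langle |A|^{1/2}x,\,|A|^{1/2}U^*x\rangle.$$
Applying Cauchy--Schwarz to the right-hand side immediately yields
$$|\langle Ax,x\rangle|\le \||A|^{1/2}x\|\,\||A|^{1/2}U^*x\|.$$

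Next I would identify the two factors as the desired quadratic forms. The first is routine: $\||A|^{1/2}x\|^2=\langle |A|x,x\rangle$. For the second, $\||A|^{1/2}U^*x\|^2=\langle |A|U^*x,U^*x\rangle=\langle U|A|U^*x,x\rangle$, so everything reduces to the operator identity $U|A|U^*=|A^*|$. Granting this, the second factor becomes $\sqrt{\langle |A^*|x,x\rangle}$, and multiplying the two bounds gives exactly $|\langle Ax,x\rangle|\le\sqrt{\langle |A|x,x\rangle\langle |A^*|x,x\rangle}$, as claimed.

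The main obstacle, and the only genuinely delicate point, is establishing $U|A|U^*=|A^*|$, since $U$ is merely a partial isometry rather than a unitary. I would derive it from the companion polar decomposition $A^*=U^*|A^*|$; together with the adjoint relation $A^*=|A|U^*$ this gives $|A|U^*=U^*|A^*|$, and multiplying on the left by $U$ produces $U|A|U^*=UU^*|A^*|$. Since $UU^*$ is the orthogonal projection onto $\overline{\mathrm{ran}\,A}=\overline{\mathrm{ran}\,|A^*|}$, one has $UU^*|A^*|=|A^*|$, which closes the argument. An alternative route that bypasses the partial-isometry bookkeeping is to verify directly that the $2\times 2$ operator matrix $\left(\begin{smallmatrix}|A| & A^*\\ A & |A^*|\end{smallmatrix}\right)$ is positive and then invoke the standard quadratic-form bound $|\langle Xx,y\rangle|^2\le\langle Px,x\rangle\langle Qy,y\rangle$ for a positive block matrix $\left(\begin{smallmatrix}P & X^*\\ X & Q\end{smallmatrix}\right)$ with the choice $x=y$; I would keep this in reserve should the polar-decomposition identity prove awkward to state cleanly.
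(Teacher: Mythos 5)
Your proof is correct. Note that the paper itself gives no proof of this lemma---it is quoted directly from Halmos's \emph{A Hilbert Space Problem Book} (the citation [pp.~75--76] in the statement)---so there is no in-paper argument to compare against; your polar-decomposition-plus-Cauchy--Schwarz argument is in fact the standard proof of the mixed Schwarz inequality, essentially the one in the cited source. All the steps check out: the factorization $\langle Ax,x\rangle=\langle |A|^{1/2}x,\,|A|^{1/2}U^*x\rangle$, the application of Cauchy--Schwarz, and the identification $\||A|^{1/2}U^*x\|^2=\langle U|A|U^*x,x\rangle$. The one place you could streamline is the identity $U|A|U^*=|A^*|$: instead of invoking the companion decomposition $A^*=U^*|A^*|$ (whose usual textbook proof itself passes through the very identity you are after) and then handling the projection $UU^*$, it is shorter to observe that $U|A|U^*$ is positive and that $\left(U|A|U^*\right)^2=U|A|\,U^*U\,|A|U^*=U|A|^2U^*=AA^*$, since $U^*U$ is the projection onto $\overline{\mathrm{ran}\,|A|}$ and hence $U^*U|A|=|A|$; uniqueness of the positive square root then yields $U|A|U^*=|A^*|$ with no range bookkeeping. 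Your block-matrix fallback (positivity of $\bigl(\begin{smallmatrix}|A| & A^*\\ A & |A^*|\end{smallmatrix}\bigr)$) is also a legitimate alternative, though proving that positivity again amounts to the same polar-decomposition computation.
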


\begin{lemma}\label{10}
Let $A\in \mathbb{B}\left( \mathcal{H} \right)$ be  self-adjoint. Then for any unit vector $x\in \mathcal{H}$,
\[{{\left\langle Ax,x \right\rangle }^{2}}\le \left\langle {{A}^{2}}x,x \right\rangle .\]
\end{lemma}

Also, we will need the following result \cite[Proposition 3.8]{lin}.
\begin{lemma}\label{lem_lin}
Let $A,B\in\mathbb{B}(\mathcal{H})$ be positive. Then
$$\|A+iB\|\leq \|A+B\|.$$
\end{lemma}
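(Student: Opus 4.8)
The plan is to avoid comparing $\|(A+iB)x\|$ and $\|(A+B)x\|$ vector by vector: that pointwise inequality is in fact false in general (it breaks down at vectors where $\langle Ax,Bx\rangle$ has negative real part), so the suprema defining the two norms must be exploited globally rather than fibre-wise. The two facts I would lean on are the bilinear description of the operator norm recorded in the introduction, $\|T\|=\sup_{\|x\|=\|y\|=1}|\langle Tx,y\rangle|$, and the elementary fact that, since $A+B$ is positive, its norm is read off the diagonal of its quadratic form, $\|A+B\|=\sup_{\|x\|=1}\langle (A+B)x,x\rangle$.

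Next I would fix unit vectors $x,y$ and decompose
\[\langle (A+iB)x,y\rangle=\langle Ax,y\rangle+i\langle Bx,y\rangle,\]
so that the triangle inequality in $\mathbb{C}$ gives
\[|\langle (A+iB)x,y\rangle|\le |\langle Ax,y\rangle|+|\langle Bx,y\rangle|.\]
Because $A$ and $B$ are positive I can factor through their square roots and apply Cauchy--Schwarz: writing $\langle Ax,y\rangle=\langle A^{1/2}x,A^{1/2}y\rangle$ yields $|\langle Ax,y\rangle|\le \langle Ax,x\rangle^{1/2}\langle Ay,y\rangle^{1/2}$, and symmetrically $|\langle Bx,y\rangle|\le \langle Bx,x\rangle^{1/2}\langle By,y\rangle^{1/2}$.

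The final step combines these two estimates with the scalar Cauchy--Schwarz inequality $\sqrt{p_1q_1}+\sqrt{p_2q_2}\le\sqrt{p_1+p_2}\,\sqrt{q_1+q_2}$ for nonnegative reals (applied with $p_1=\langle Ax,x\rangle$, $p_2=\langle Bx,x\rangle$ and the analogous values at $y$), which produces
\[|\langle (A+iB)x,y\rangle|\le \langle (A+B)x,x\rangle^{1/2}\langle (A+B)y,y\rangle^{1/2}\le \|A+B\|,\]
the last bound using positivity of $A+B$ together with $\|x\|=\|y\|=1$. Taking the supremum over all unit $x,y$ then delivers $\|A+iB\|\le\|A+B\|$. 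The genuine obstacle here is conceptual rather than computational: one must recognize that the inequality cannot be established pointwise, and that positivity of $A+B$ is precisely what allows the off-diagonal bilinear form $\langle (A+iB)x,y\rangle$ to be traded for the diagonal quadratic form of a positive operator. Once that idea is in place, the chain of triangle and Cauchy--Schwarz steps is entirely routine.
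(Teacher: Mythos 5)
Your proof is correct, but there is nothing in the paper to compare it against: the paper does not prove this lemma at all, it imports it verbatim as Proposition 3.8 of Lin and Zhou \cite{lin}. Your argument is a clean, self-contained replacement, and every step checks out: positivity of $A$ justifies the factorization $\langle Ax,y\rangle=\langle A^{1/2}x,A^{1/2}y\rangle$ (existence and self-adjointness of the square root), Cauchy--Schwarz then gives $|\langle Ax,y\rangle|\le\langle Ax,x\rangle^{1/2}\langle Ay,y\rangle^{1/2}$ and likewise for $B$, the two-term scalar Cauchy--Schwarz inequality collapses the sum into the quadratic form of $A+B$, and positivity of $A+B$ bounds that form by $\|A+B\|$ on unit vectors; in full,
\[
|\langle (A+iB)x,y\rangle|\le |\langle Ax,y\rangle|+|\langle Bx,y\rangle|
\le \sqrt{\langle Ax,x\rangle\langle Ay,y\rangle}+\sqrt{\langle Bx,x\rangle\langle By,y\rangle}
\le \sqrt{\langle (A+B)x,x\rangle\langle (A+B)y,y\rangle}\le \|A+B\|,
\]
and taking the supremum over unit $x,y$, using $\|T\|=\sup_{\|x\|=\|y\|=1}|\langle Tx,y\rangle|$ as recorded in the paper's introduction, finishes the proof. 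Two remarks. First, your argument proves strictly more than the lemma states: since the triangle inequality discards all phases, the identical chain yields $\|zA+wB\|\le\|A+B\|$ for any scalars with $|z|\le 1$ and $|w|\le 1$; in particular $\|A-B\|\le\|A+B\|$, the operator-norm case of Kittaneh's result \cite{3}. Making the paper independent of the external reference at no cost in length is a genuine gain. Second, a small correction to your motivational remark: the pointwise inequality $\|(A+iB)x\|\le\|(A+B)x\|$ fails precisely when $\Im\langle Ax,Bx\rangle>\Re\langle Ax,Bx\rangle$, not when $\Re\langle Ax,Bx\rangle<0$, since the two squared norms differ by $2\Im\langle Ax,Bx\rangle-2\Re\langle Ax,Bx\rangle$; this inaccuracy carries no weight, as your proof never uses a pointwise comparison.
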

In the following proposition, we restate  \eqref{kitt_ineq_intro} in terms of the Cartesian decomposition. 
\begin{proposition}\label{5}
Let $A,B\in \mathbb{B}\left( \mathcal{H} \right)$ be self-adjoint. Then
$$\frac{1}{2}\|A^2+B^2\|\leq \omega^2(A+iB)\leq \|A^2+B^2\|.$$
\end{proposition}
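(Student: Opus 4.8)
The plan is to recognize this proposition as nothing more than a direct translation of Kittaneh's inequality \eqref{kitt_ineq_intro} under the Cartesian decomposition. I would set $T=A+iB$, so that, since $A$ and $B$ are self-adjoint, $T^*=A-iB$ and $A=\Re T$, $B=\Im T$. The whole statement should then follow by rewriting $|T|^2+|T^*|^2$ in terms of $A$ and $B$ and substituting into \eqref{kitt_ineq_intro}.

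The key computation is the identity $|T|^2+|T^*|^2=2(A^2+B^2)$. Expanding $T^*T=(A-iB)(A+iB)=A^2+B^2+i(AB-BA)$ and $TT^*=(A+iB)(A-iB)=A^2+B^2-i(AB-BA)$, the commutator terms $\pm i(AB-BA)$ cancel upon addition, leaving exactly $2(A^2+B^2)$. This is the only place where care is needed: one must track the signs so that the self-adjoint part $A^2+B^2$ doubles while the skew part vanishes. Substituting this identity into \eqref{kitt_ineq_intro} gives $\frac14\|2(A^2+B^2)\|\le\omega^2(T)\le\frac12\|2(A^2+B^2)\|$, and pulling the factor $2$ out of the norm yields precisely $\frac12\|A^2+B^2\|\le\omega^2(A+iB)\le\|A^2+B^2\|$. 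Since this is a pure substitution, there is no genuine obstacle beyond the algebraic identity above.

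Alternatively, and perhaps more in the spirit of the self-contained tools already assembled, I could prove both bounds directly. For the upper bound, for a unit vector $x$ one has $|\langle(A+iB)x,x\rangle|^2=\langle Ax,x\rangle^2+\langle Bx,x\rangle^2$, because $\langle Ax,x\rangle$ and $\langle Bx,x\rangle$ are real when $A,B$ are self-adjoint; applying Lemma \ref{10} to each term bounds this by $\langle A^2x,x\rangle+\langle B^2x,x\rangle=\langle(A^2+B^2)x,x\rangle\le\|A^2+B^2\|$, and taking the supremum over unit vectors gives $\omega^2(A+iB)\le\|A^2+B^2\|$.

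For the lower bound, I would use the norm estimates $\|A\|=\|\Re T\|\le\omega(T)$ and $\|B\|=\|\Im T\|\le\omega(T)$, whence, by the triangle inequality for the norm, $\|A^2+B^2\|\le\|A^2\|+\|B^2\|=\|A\|^2+\|B\|^2\le2\omega^2(A+iB)$. The only delicate point in this second route is justifying $\|\Re T\|,\|\Im T\|\le\omega(T)$; this follows from the characterization $\omega(T)=\sup_{\theta}\|\Re(e^{i\theta}T)\|$ by taking $\theta=0$ and $\theta=-\pi/2$. I expect the algebraic identity $|T|^2+|T^*|^2=2(A^2+B^2)$ to be the crux of the first approach, and the norm comparison $\|\Re T\|,\|\Im T\|\le\omega(T)$ to be the crux of the second.
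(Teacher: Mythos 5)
Your first approach is exactly the paper's: the paper gives no separate proof of Proposition \ref{5}, stating only that it is a restatement of Kittaneh's inequality \eqref{kitt_ineq_intro} in terms of the Cartesian decomposition, and your identity $|T|^2+|T^*|^2=2(A^2+B^2)$ (with the commutator terms cancelling) is precisely the substitution that justifies this, carried out correctly. Your alternative self-contained argument via Lemma \ref{10} and the bounds $\|\Re T\|,\|\Im T\|\le\omega(T)$ is also valid, but it is extra; the intended proof is the one-line translation you gave first.
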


\section{New Results}

We begin our main results with the the numerical radius version of the inequality \eqref{1}, which can be stated as follows.
\begin{theorem}\label{2}
Let $A,B\in \mathbb{B}\left( \mathcal{H} \right)$. Then
\[\omega \left( A+B \right)\le \frac{1}{\sqrt{2}}\omega \left( \left( \left| A \right|+\left| B \right| \right)+i\left( \left| {{A}^{*}} \right|+\left| {{B}^{*}} \right| \right) \right).\]
Further, this inequality is sharp, in the sense that the factor $\frac{1}{\sqrt{2}}$ cannot be replaced by a smaller number.
\end{theorem}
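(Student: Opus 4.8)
The plan is to reduce the operator statement to a pointwise estimate on unit vectors and then feed the scalar inequality \eqref{1} into it. Fix a unit vector $x\in\mathcal{H}$ and write the inner product $\left\langle (A+B)x,x\right\rangle = \left\langle Ax,x\right\rangle + \left\langle Bx,x\right\rangle$. The quantities $\left\langle Ax,x\right\rangle$ and $\left\langle Bx,x\right\rangle$ are in general complex, not real, so \eqref{1} does not apply directly to them. The key maneuver I would use is first to bound $\left|\left\langle (A+B)x,x\right\rangle\right| \le \left|\left\langle Ax,x\right\rangle\right| + \left|\left\langle Bx,x\right\rangle\right|$ by the triangle inequality, and then control each modulus by Lemma \ref{lemma1}, giving
\[
\left|\left\langle Ax,x\right\rangle\right| \le \sqrt{\left\langle |A|x,x\right\rangle \left\langle |A^*|x,x\right\rangle}, \qquad \left|\left\langle Bx,x\right\rangle\right| \le \sqrt{\left\langle |B|x,x\right\rangle \left\langle |B^*|x,x\right\rangle}.
\]
Now everything on the right is a product of two \emph{nonnegative real} numbers, which is the setting where \eqref{1} becomes available.

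Next I would introduce the real nonnegative scalars $a_1=\left\langle |A|x,x\right\rangle$, $a_2=\left\langle |A^*|x,x\right\rangle$, $b_1=\left\langle |B|x,x\right\rangle$, $b_2=\left\langle |B^*|x,x\right\rangle$, so that the bound reads $\left|\left\langle (A+B)x,x\right\rangle\right| \le \sqrt{a_1 a_2} + \sqrt{b_1 b_2}$. The target right-hand side, evaluated at the same $x$, is $\tfrac{1}{\sqrt2}$ times the modulus $\left|\,\left\langle(|A|+|B|)x,x\right\rangle + i\left\langle(|A^*|+|B^*|)x,x\right\rangle\,\right| = \tfrac{1}{\sqrt2}\sqrt{(a_1+b_1)^2+(a_2+b_2)^2}$. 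So the whole theorem will follow once I establish the purely scalar inequality
\[
\sqrt{a_1 a_2} + \sqrt{b_1 b_2} \le \frac{1}{\sqrt2}\sqrt{(a_1+b_1)^2+(a_2+b_2)^2}
\]
for all $a_1,a_2,b_1,b_2\ge 0$. I would prove this by writing $\sqrt{a_1a_2}=\left|\sqrt{a_1}\cdot\sqrt{a_2}\right|$ and applying \eqref{1} in the form $|\alpha+\beta|\le\sqrt2|\alpha+i\beta|$ with a careful choice of $\alpha,\beta$; more directly, by squaring and using the AM--GM-type estimate $2\sqrt{a_1a_2}\le a_1+a_2$ together with the parallelogram structure of the right-hand side, or simply by recognizing the right-hand quantity as $\tfrac{1}{\sqrt2}$ times the Euclidean norm of the vector $(a_1+b_1,\,a_2+b_2)$ and comparing term by term.

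Having the pointwise bound, I would take the supremum over all unit vectors $x$. On the left this yields $\omega(A+B)$ directly. On the right, since the supremum of a nonnegative expression is attained by pushing each inner product up, and since the map defining the numerical radius of $(|A|+|B|)+i(|A^*|+|B^*|)$ evaluates exactly the modulus $\sqrt{(a_1+b_1)^2+(a_2+b_2)^2}$ at each $x$, the supremum of the right-hand side is precisely $\tfrac{1}{\sqrt2}\,\omega\!\left((|A|+|B|)+i(|A^*|+|B^*|)\right)$. The step requiring the most care is the interchange of the supremum with the pointwise bound: because the left and right sides are evaluated at the \emph{same} vector $x$ before taking sup, the inequality survives the supremum without any loss, but one must phrase this so that the right-hand supremum is genuinely the numerical radius of the stated sum operator rather than a larger quantity.

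For the sharpness claim, I would exhibit an explicit example — the natural candidate is a normal or positive operator (or a simple $2\times 2$ matrix) for which $|A|=|A^*|=A$ and $B=0$, reducing the inequality to the scalar equality case of \eqref{1} where $a=b$. Choosing $A$ so that $a_1=a_2$ forces equality in both Lemma \ref{lemma1} and in \eqref{1}, thereby showing the constant $\tfrac{1}{\sqrt2}$ cannot be lowered. I expect the sharpness verification to be routine once the right example is identified; the genuine content of the proof lies in the scalar inequality and the clean passage through Lemma \ref{lemma1}.
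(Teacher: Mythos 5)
Your proposal is correct and follows essentially the same route as the paper: triangle inequality, then Lemma \ref{lemma1}, then the arithmetic--geometric mean step combined with the scalar inequality \eqref{1}, followed by taking the supremum, with sharpness witnessed by $B=0$ and $A$ positive (so that $|A|=|A^*|=A$ and \eqref{1} is attained at $a=b$). Your packaging of the middle steps as the single scalar inequality $\sqrt{a_1a_2}+\sqrt{b_1b_2}\le \tfrac{1}{\sqrt{2}}\sqrt{(a_1+b_1)^2+(a_2+b_2)^2}$ is only a cosmetic reorganization of the paper's argument, since your stated proof of it (AM--GM plus $(u+v)\le\sqrt{2}\sqrt{u^2+v^2}$) is exactly the paper's chain of estimates.
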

\begin{proof}
Let $x\in \mathcal{H}$ be a unit vector. Then
\[\begin{aligned}
   \left| \left\langle \left( A+B \right)x,x \right\rangle  \right|&\le \left| \left\langle Ax,x \right\rangle  \right|+\left| \left\langle Bx,x \right\rangle  \right| \\ 
 & \le \left| \sqrt{\left\langle \left| A \right|x,x \right\rangle \left\langle \left| {{A}^{*}} \right|x,x \right\rangle }+\sqrt{\left\langle \left| B \right|x,x \right\rangle \left\langle \left| {{B}^{*}} \right|x,x \right\rangle } \right| \\ 
 & \le \left| \frac{1}{2}\left( \left\langle \left| A \right|x,x \right\rangle +\left\langle \left| {{A}^{*}} \right|x,x \right\rangle  \right)+\frac{1}{2}\left( \left\langle \left| B \right|x,x \right\rangle +\left\langle \left| {{B}^{*}} \right|x,x \right\rangle  \right) \right| \\ 
 & =\left| \left\langle \left( \frac{\left| A \right|+\left| B \right|}{2} \right)x,x \right\rangle +\left\langle \left( \frac{\left| {{A}^{*}} \right|+\left| {{B}^{*}} \right|}{2} \right)x,x \right\rangle  \right| \\ 
 & \le \sqrt{2}\left| \left\langle \left( \frac{\left| A \right|+\left| B \right|}{2} \right)x,x \right\rangle +i\left\langle \left( \frac{\left| {{A}^{*}} \right|+\left| {{B}^{*}} \right|}{2} \right)x,x \right\rangle  \right| \\ 
 & =\sqrt{2}\left| \left\langle \left( \frac{\left| A \right|+\left| B \right|}{2}+i\frac{\left| {{A}^{*}} \right|+\left| {{B}^{*}} \right|}{2} \right)x,x \right\rangle  \right|,
\end{aligned}\]
where the first inequality follows from the triangle inequality, the second inequality is obtained by Lemma  \ref{lemma1}, the third inequality is obtained by the arithmetic-geometric mean inequality and the forth inequality, follows from \eqref{1}.
Therefore, we have shown that for any unit vector $x\in\mathcal{H}$,
\[\left| \left\langle \left( A+B \right)x,x \right\rangle  \right|\le \frac{1}{\sqrt{2}}\left| \left\langle \left( \left( \left| A \right|+\left| B \right| \right)+i\left( \left| {{A}^{*}} \right|+\left| {{B}^{*}} \right| \right) \right)x,x \right\rangle  \right|.\]
Now, by taking supremum over all unit vector $x\in \mathcal{H}$, we get
\[\omega \left( A+B \right)\le \frac{1}{\sqrt{2}}\omega \left( \left( \left| A \right|+\left| B \right| \right)+i\left( \left| {{A}^{*}} \right|+\left| {{B}^{*}} \right| \right) \right),\]
	which completes the proof of the first assertion of the Theorem. To show that the factor $\frac{1}{\sqrt{2}}$ is best possible, let $B=0$ and assume that $A$ is positive. Direct calculations show that the inequality is sharp, which completes the proof.
\end{proof}

The following result shows how Theorem \ref{2} refines \eqref{kitt_ineq_intro}.
\begin{corollary}
Let $A\in \mathbb{B}\left( \mathcal{H} \right)$. Then
\[{{\omega }^{2}}\left( A \right)\le \frac{1}{2}{{\omega }^{2}}\left( \left| A \right|+i\left| {{A}^{*}} \right| \right)\le \frac{1}{2}\left\| {{\left| A \right|}^{2}}+{{\left| {{A}^{*}} \right|}^{2}} \right\|.\]
\end{corollary}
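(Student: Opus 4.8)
The plan is to obtain both inequalities by specializing the two main tools already at hand: Theorem \ref{2} for the left-hand inequality and Proposition \ref{5} for the right-hand one. The whole argument is a matter of choosing the right operators to plug in, so I expect no genuine obstacle beyond verifying that the specializations are legitimate.

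First I would establish the inequality ${{\omega }^{2}}\left( A \right)\le \frac{1}{2}{{\omega }^{2}}\left( \left| A \right|+i\left| {{A}^{*}} \right| \right)$ by applying Theorem \ref{2} with the choice $B=0$. Since $\left| 0 \right|=\left| {{0}^{*}} \right|=0$, Theorem \ref{2} collapses to
\[\omega \left( A \right)\le \frac{1}{\sqrt{2}}\omega \left( \left| A \right|+i\left| {{A}^{*}} \right| \right),\]
and squaring both sides yields exactly the desired left-hand inequality. This is the step that ties the corollary back to the theorem it is meant to illustrate.

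Next I would treat the right-hand inequality $\frac{1}{2}{{\omega }^{2}}\left( \left| A \right|+i\left| {{A}^{*}} \right| \right)\le \frac{1}{2}\left\| {{\left| A \right|}^{2}}+{{\left| {{A}^{*}} \right|}^{2}} \right\|$. Here the key observation is that $\left| A \right|$ and $\left| {{A}^{*}} \right|$ are self-adjoint (indeed positive), so Proposition \ref{5} applies with $\left| A \right|$ and $\left| {{A}^{*}} \right|$ playing the roles of the self-adjoint operators there. The upper bound in Proposition \ref{5} then gives
\[{{\omega }^{2}}\left( \left| A \right|+i\left| {{A}^{*}} \right| \right)\le \left\| {{\left| A \right|}^{2}}+{{\left| {{A}^{*}} \right|}^{2}} \right\|,\]
and multiplying through by $\tfrac{1}{2}$ produces the right-hand inequality.

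Chaining the two displayed estimates gives the corollary. The only point requiring care is confirming that the hypotheses of the borrowed results are met, namely that setting $B=0$ is admissible in Theorem \ref{2} and that $\left| A \right|,\left| {{A}^{*}} \right|$ qualify as the self-adjoint inputs of Proposition \ref{5}; both are immediate. I would close by remarking that since the upper bound $\tfrac{1}{2}\left\| {{\left| A \right|}^{2}}+{{\left| {{A}^{*}} \right|}^{2}} \right\|$ is precisely the right-hand side of Kittaneh's inequality \eqref{kitt_ineq_intro}, the middle quantity $\frac{1}{2}{{\omega }^{2}}\left( \left| A \right|+i\left| {{A}^{*}} \right| \right)$ sits strictly between ${{\omega }^{2}}\left( A \right)$ and that bound, which is the sense in which the corollary refines \eqref{kitt_ineq_intro}.
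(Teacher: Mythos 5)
Your proof is correct and follows exactly the paper's own argument: the left inequality by taking $B=0$ in Theorem \ref{2} and squaring, the right inequality by applying Proposition \ref{5} to the self-adjoint operators $\left| A \right|$ and $\left| A^{*} \right|$. Your added verification that the hypotheses of the borrowed results are satisfied is a reasonable elaboration of what the paper leaves implicit.
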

\begin{proof}
The first inequality follows from Theorem \ref{2} by taking $B=0$ and the second inequality follows  from Proposition \ref{5}.
This completes the proof.
\end{proof}

Using the same method presented in Theorem \ref{2}, we can obtain the following result; a different form of Theorem \ref{2}.
\begin{theorem}
Let $A,B\in \mathbb{B}\left( \mathcal{H} \right)$. Then
	\[\omega \left( A+B \right)\le \frac{1}{\sqrt{2}}\omega \left( \left( \left| A \right|+\left| {{A}^{*}} \right| \right)+i\left( \left| B \right|+\left| {{B}^{*}} \right| \right) \right).\]
	Further, the factor $\frac{1}{\sqrt{2}}$ is best possible.
\end{theorem}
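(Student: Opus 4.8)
The plan is to follow the proof of Theorem \ref{2} verbatim through the mixed-Schwarz and arithmetic--geometric-mean steps, and to change only the way the four resulting terms are grouped before the scalar inequality \eqref{1} is applied. First I would fix a unit vector $x\in\mathcal{H}$ and use the triangle inequality to obtain $|\langle(A+B)x,x\rangle|\le|\langle Ax,x\rangle|+|\langle Bx,x\rangle|$. Lemma \ref{lemma1} then bounds each summand by $\sqrt{\langle|A|x,x\rangle\langle|A^*|x,x\rangle}$ and $\sqrt{\langle|B|x,x\rangle\langle|B^*|x,x\rangle}$ respectively, and the arithmetic--geometric mean inequality replaces each square root by the average of its two factors, leaving the upper bound $\tfrac12\left(\langle|A|x,x\rangle+\langle|A^*|x,x\rangle\right)+\tfrac12\left(\langle|B|x,x\rangle+\langle|B^*|x,x\rangle\right)$.

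The one new ingredient is the regrouping. Whereas Theorem \ref{2} combines the moduli as $|A|+|B|$ (to form the real part) and $|A^*|+|B^*|$ (to form the imaginary part), here I would instead keep the two terms coming from $A$ together and the two coming from $B$ together, setting $a=\langle\frac{|A|+|A^*|}{2}x,x\rangle$ and $b=\langle\frac{|B|+|B^*|}{2}x,x\rangle$, so that the bound above is exactly $a+b$ with $a,b\in\mathbb{R}$. Inequality \eqref{1} gives $a+b\le\sqrt2\,|a+ib|$, and identifying $a+ib=\langle(\frac{|A|+|A^*|}{2}+i\frac{|B|+|B^*|}{2})x,x\rangle$ yields the pointwise estimate $|\langle(A+B)x,x\rangle|\le\frac{1}{\sqrt2}\,|\langle((|A|+|A^*|)+i(|B|+|B^*|))x,x\rangle|$. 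Taking the supremum over all unit vectors $x$ then delivers the stated inequality.

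For the sharpness claim, the witness $B=0$ used in Theorem \ref{2} no longer forces equality here, since it kills the imaginary part and leaves a factor $\sqrt2$ of slack; a different extremal example is therefore needed. I expect $A=B=P$ with $P$ positive to saturate the constant: the left-hand side is $\omega(2P)=2\|P\|$, while the right-hand side is $\frac{1}{\sqrt2}\,\omega\big(2(1+i)P\big)=\frac{1}{\sqrt2}\cdot 2\sqrt2\,\|P\|=2\|P\|$, using $\omega(P)=\|P\|$ for positive $P$ together with $\omega(\lambda T)=|\lambda|\,\omega(T)$. Since every inequality in the chain is an equality for such $P$, the factor $\frac{1}{\sqrt2}$ cannot be lowered. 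As nothing in the argument is delicate beyond the algebra of this regrouping, the main (indeed the only) obstacle is pinpointing the correct extremal operator, which the symmetry of the final expression under interchanging $A$ and $B$ suggests should be diagonal in the sense $A=B$.
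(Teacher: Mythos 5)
Your proposal is correct and is exactly the argument the paper intends: the paper gives no separate proof, saying only that the result follows ``using the same method presented in Theorem \ref{2},'' and your regrouping of the four terms ($A$-terms as the real part, $B$-terms as the imaginary part) before applying \eqref{1} is precisely that method. Your sharpness witness $A=B=P$ with $P$ positive is also correct (and needed, as you rightly note that the $B=0$ witness from Theorem \ref{2} leaves a factor $\sqrt{2}$ of slack here), filling in a detail the paper omits.
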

The next Corollary follows from Theorem \ref{2} and by taking into account that the sum of two normal operators, need not necessarily a normal operator.
\begin{corollary}\label{3}
Let $A,B\in \mathbb{B}\left( \mathcal{H} \right)$ be two normal operators. Then
\[\omega \left( A+B \right)\le \sqrt{2}\omega \left( \left| A \right|+i\left| B \right| \right).\]
In particular, if $T=A+iB$ is accretive-dissipative, then
\begin{equation}\label{4}
\omega(T)\geq\frac{1}{\sqrt{2}}\|A+B\|.
\end{equation}
\end{corollary}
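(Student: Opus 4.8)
The statement splits into two claims, and the plan is to treat them in turn, deriving the second from the first by specialization. For the first inequality the plan is to exploit the single structural fact that a normal operator $N$ satisfies $|N|=|N^*|$ (since $N^*N=NN^*$ gives $|N|^2=|N^*|^2$, and the positive square root is unique). With this in hand I would re-run the argument of Theorem \ref{2} with the sharper bookkeeping it permits. Concretely, for a unit vector $x$ I would start from the triangle inequality $|\langle(A+B)x,x\rangle|\le|\langle Ax,x\rangle|+|\langle Bx,x\rangle|$ and apply Lemma \ref{lemma1} to each term; because $|A|=|A^*|$ and $|B|=|B^*|$, the two geometric means collapse to $\langle|A|x,x\rangle$ and $\langle|B|x,x\rangle$. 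The real scalars $a=\langle|A|x,x\rangle$ and $b=\langle|B|x,x\rangle$ then feed directly into \eqref{1}, giving $a+b\le\sqrt2\,|a+ib|=\sqrt2\,|\langle(|A|+i|B|)x,x\rangle|$. Taking the supremum over unit $x$ yields $\omega(A+B)\le\sqrt2\,\omega(|A|+i|B|)$. Equivalently, one can simply quote the variant of Theorem \ref{2} proved just above, which for normal $A,B$ reduces to $\frac{1}{\sqrt2}\omega\bigl(2|A|+2i|B|\bigr)=\sqrt2\,\omega(|A|+i|B|)$.

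For the second claim I would specialize. Writing the Cartesian decomposition $T=A+iB$ with $A,B$ self-adjoint, the hypothesis that $T$ is accretive-dissipative means exactly that $\Re T=A$ and $\Im T=B$ are both positive. Positive operators are self-adjoint, hence normal, so the first part applies and gives $\omega(A+B)\le\sqrt2\,\omega(|A|+i|B|)$. Since $A,B\ge0$ we have $|A|=A$ and $|B|=B$, so the right-hand side is $\sqrt2\,\omega(A+iB)=\sqrt2\,\omega(T)$. On the left-hand side, $A+B$ is a sum of positive operators, hence positive and in particular self-adjoint, so $\omega(A+B)=\|A+B\|$. Combining these, $\|A+B\|\le\sqrt2\,\omega(T)$, which is precisely \eqref{4}.

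The step to watch is the passage from numerical radius to norm, and this is exactly what the parenthetical remark about normality is flagging: for general normal $A,B$ the sum $A+B$ need not be normal, so one may \emph{not} replace $\omega(A+B)$ by $\|A+B\|$, and the first inequality genuinely bounds a numerical radius by a numerical radius. What rescues the accretive-dissipative case is the stronger fact that $A+B$ is not merely normal but \emph{positive}, for which $\omega=\|\cdot\|$ holds unconditionally; similarly one must not assume $T$ itself is normal. Keeping these two facts separate — normality only buys $|N|=|N^*|$, whereas positivity of $A+B$ buys $\omega(A+B)=\|A+B\|$ — is the crux of landing the constant $\frac{1}{\sqrt2}$ correctly.
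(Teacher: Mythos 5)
Your proposal is correct and takes essentially the same route as the paper: the first inequality is exactly the variant of Theorem \ref{2} specialized via $|N|=|N^*|$ for normal $N$ (equivalently, re-running that proof with the geometric means collapsed), and \eqref{4} follows by taking $A,B$ positive, so that $|A|=A$, $|B|=B$ and $\omega(A+B)=\|A+B\|$. You in fact spell out details the paper leaves implicit, including the correct reading of the remark that $A+B$ need not be normal.
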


\begin{remark}
In \cite{bhatia} (See also \cite[Lemma 3.5]{lin}), it is shown that if $A,B\in\mathbb{B}(\mathcal{H})$ are self-adjoint, then
\begin{align}\label{lin_lem_eq}
\|A+B\|_u\leq \sqrt{2}\|A+iB\|_u,
\end{align}
for any unitarily invariant norm $\|\cdot\|_u$. It is implicitly understood that $\|\cdot\|_u$ is defined on an ideal in $\mathbb{B}(\mathcal{H})$, and it is implicitly understood that $T$ is in that ideal, when we speak of $\|T\|_u$.  We notice, first, that Corollary \ref{3} provides the numerical radius version of \eqref{lin_lem_eq}, in which $A,B$ are normal; a wider class than positive matrices. Further, Corollary \ref{3} provides a refinement of \eqref{lin_lem_eq} in case of the usual operator norm since
$$\|A+B\| \leq \sqrt{2}\omega(A+iB)\leq\sqrt{2}\|A+iB\|.$$

\end{remark}

The next result provides a considerable improvement of the first inequality in \eqref{eq_intro_1}, for accretive-dissipative operators.
\begin{theorem}\label{6}
Let $T\in \mathbb{B}\left( \mathcal{H} \right)$ be  accretive-dissipative. Then
\[\frac{1}{\sqrt{2}}\left\| T \right\|\le \omega \left( T \right).\]
\end{theorem}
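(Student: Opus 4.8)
The plan is to reduce the statement to two ingredients already available in the excerpt: the lower bound \eqref{4} for accretive-dissipative operators and the norm comparison for positive operators in Lemma \ref{lem_lin}. The whole argument is a short assembly of these, so I expect no serious computation.

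First I would fix the Cartesian decomposition $T=A+iB$, where $A=\Re T$ and $B=\Im T$. By the definition of accretive-dissipative, both $A$ and $B$ are positive (hence self-adjoint), so $T$ is precisely of the form treated in the ``in particular'' clause of Corollary \ref{3}. Applying \eqref{4} directly yields
\[\omega(T)\ge\frac{1}{\sqrt{2}}\|A+B\|.\]

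Next, since $A$ and $B$ are positive, Lemma \ref{lem_lin} applies to the pair $(A,B)$ and gives $\|A+iB\|\le\|A+B\|$; that is, $\|T\|\le\|A+B\|$. Substituting this into the previous display produces
\[\omega(T)\ge\frac{1}{\sqrt{2}}\|A+B\|\ge\frac{1}{\sqrt{2}}\|A+iB\|=\frac{1}{\sqrt{2}}\|T\|,\]
which is the claimed inequality.

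The only genuine subtlety, rather than an obstacle, is keeping straight the direction of Lemma \ref{lem_lin}: it bounds $\|A+iB\|$ from above by $\|A+B\|$, and this is exactly the direction needed so that the constant $\frac{1}{\sqrt{2}}$ carries through unchanged. Once one recognizes that \eqref{4} already supplies the lower bound in terms of $\|A+B\|$ and that Lemma \ref{lem_lin} converts $\|A+B\|$ into an upper bound for $\|T\|=\|A+iB\|$, the theorem follows immediately by chaining the two estimates.
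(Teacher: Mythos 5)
Your proof is correct and follows essentially the same route as the paper: both combine inequality \eqref{4} from Corollary \ref{3} with Lemma \ref{lem_lin} applied to the positive parts $A=\Re T$ and $B=\Im T$. In fact, your write-up states the chain $\|T\|\le\|A+B\|\le\sqrt{2}\,\omega(T)$ more carefully than the paper's own display, which contains a misdirected inequality sign.
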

\begin{proof}
Let $T=A+iB$ be the Cartesian decomposition of $T$, in which both $A,B$ are positive. Then Corollary \ref{3} together with Lemma \ref{lem_lin} imply
$$\|T\|=\|A+iB\|\geq \|A+B\|\leq \sqrt{2}\omega(T).$$ This completes the proof.
\end{proof}

From \eqref{eq_intro_1} and the fact that the operator norm is sub-multiplicative, we obtain the well known inequality
$$\omega(AB)\leq\|AB\|\leq \|A\|\;\|B\|\leq 4\omega(A)\omega(B).$$ It is well established that the factor 4 cannot be replaced by a smaller factor in general. However, when $A$ or $B$ is normal, we obtain the better bound $\omega(AB)\leq 2\omega(A)\omega(B)$, and it is even better when both are normal as we have $\omega(AB)\leq \omega(A)\omega(B).$ In the following result, we present a new bound for accretive-dissipative operators, which is better than the bound $\omega(AB)\leq 4\omega(A)\omega(B).$ We refer the reader to \cite{gusta} for detailed study of this problem.
\begin{corollary}\label{cor_submult}
Let $S,T\in \mathbb{B}\left( \mathcal{H} \right)$ be two accretive-dissipative operators. Then
\[\omega \left( ST \right)\le 2\omega \left( S \right)\omega \left( T \right).\]
If either $T$ or $S$ is accretive-dissipative, then
$$\omega(ST)\leq 2\sqrt{2}\omega(S)\omega(T).$$
\end{corollary}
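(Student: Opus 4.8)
The plan is to combine the elementary submultiplicativity of the operator norm with the improved lower bound for $\omega$ on accretive-dissipative operators supplied by Theorem \ref{6}. The starting point in both cases is the chain
\[
\omega(ST)\le \|ST\|\le \|S\|\,\|T\|,
\]
where the first inequality is the upper bound in \eqref{eq_intro_1} and the second is the submultiplicativity of the operator norm. The whole argument then reduces to bounding each of $\|S\|$ and $\|T\|$ by a multiple of the corresponding numerical radius, and the quality of the constant we obtain depends entirely on which of the two available bounds we may invoke for each factor.

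For the first inequality, both $S$ and $T$ are accretive-dissipative, so Theorem \ref{6} applies to each and yields $\|S\|\le\sqrt{2}\,\omega(S)$ and $\|T\|\le\sqrt{2}\,\omega(T)$. Substituting these into the chain above gives $\omega(ST)\le \sqrt{2}\,\omega(S)\cdot\sqrt{2}\,\omega(T)=2\,\omega(S)\,\omega(T)$, as claimed. For the second inequality, I would use the accretive-dissipative hypothesis for only one of the factors, say $S$. Then Theorem \ref{6} still gives $\|S\|\le\sqrt{2}\,\omega(S)$, while for the remaining factor $T$ we fall back on the general bound $\|T\|\le 2\,\omega(T)$ coming from the first inequality in \eqref{eq_intro_1}, which holds for every operator. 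Multiplying produces the factor $\sqrt{2}\cdot 2=2\sqrt{2}$, exactly the stated constant; by symmetry the same conclusion follows if it is $T$ rather than $S$ that is assumed accretive-dissipative.

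Since the whole proof is a short substitution once Theorem \ref{6} is in hand, there is no serious computational obstacle. The one point requiring care is the bookkeeping of the constants: the gain over the classical bound $\omega(ST)\le 4\,\omega(S)\,\omega(T)$ comes precisely from replacing the factor $2$ (valid for a general operator) by $\sqrt{2}$ for each accretive-dissipative factor. One must therefore invoke Theorem \ref{6} exactly for those factors to which its hypothesis legitimately applies, and the weaker bound from \eqref{eq_intro_1} for the rest, so that each of the two inequalities receives the tightest constant its assumptions allow.
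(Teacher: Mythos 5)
Your proof is correct and follows exactly the paper's argument: the chain $\omega(ST)\le\|ST\|\le\|S\|\,\|T\|$ combined with Theorem \ref{6} applied to each accretive-dissipative factor (and the general bound $\|T\|\le 2\omega(T)$ from \eqref{eq_intro_1} for a non-accretive-dissipative factor). In fact, your treatment of the second inequality spells out explicitly what the paper dismisses with ``follows similarly.''
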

\begin{proof}
Noting submultiplicativity of the operator norm and Theorem \ref{6}, we have
\[\begin{aligned}
   \omega \left( ST \right)&\le \left\| ST \right\| 
 & \le \left\| S \right\|\left\| T \right\|
 & \le 2\omega \left( S \right)\omega \left( T \right),
\end{aligned}\]
which completes the proof of the first inequality. The second inequality follows similarly.
\end{proof}

It is interesting that the approach we follow in this paper allows us to obtain reversed inequalities, as well. In \cite{1}, it is shown that
$$\frac{1}{4}\left\| \;|A|^2+|A^*|^2\; \right\|\le {{w}^{2}}(A).$$ In the following, we present a refinement of this inequality using our approach.
\begin{corollary}
Let $T\in \mathbb{B}\left( \mathcal{H} \right)$ have the Cartesian decomposition $T=A+iB.$ Then
\[\begin{aligned}
   \frac{1}{4}\left\| {{\left| T \right|}^{2}}+{{\left| {{T}^{*}} \right|}^{2}} \right\|&\le \frac{\sqrt{2}}{2}\omega \left( A^2+iB^2 \right) 
 & \le {{\omega }^{2}}\left( T \right).  
\end{aligned}\]
\end{corollary}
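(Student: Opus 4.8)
The plan is to handle the two inequalities separately, after first rewriting the outer quantity in terms of the Cartesian data. Writing $T=A+iB$ with $A=\Re T$ and $B=\Im T$ self-adjoint, I would first record the identity $|T|^2+|T^*|^2=T^*T+TT^*=2(A^2+B^2)$, obtained by expanding $T^*T=(A-iB)(A+iB)$ and $TT^*=(A+iB)(A-iB)$ and adding, the commutator terms cancelling. Consequently the leftmost member equals $\frac{1}{4}\|2(A^2+B^2)\|=\frac{1}{2}\|A^2+B^2\|$, so the target reduces to the two estimates $\frac{1}{2}\|A^2+B^2\|\le\frac{\sqrt2}{2}\omega(A^2+iB^2)$ and $\frac{\sqrt2}{2}\omega(A^2+iB^2)\le\omega^2(T)$.

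For the first estimate I would observe that, since $A,B$ are self-adjoint, $A^2$ and $B^2$ are positive, so the operator $A^2+iB^2$ is itself accretive-dissipative with $\Re(A^2+iB^2)=A^2$ and $\Im(A^2+iB^2)=B^2$. Applying the ``in particular'' assertion of Corollary \ref{3} to $A^2+iB^2$ then gives $\omega(A^2+iB^2)\ge\frac{1}{\sqrt2}\|A^2+B^2\|$, that is, $\|A^2+B^2\|\le\sqrt2\,\omega(A^2+iB^2)$. Dividing by $2$ yields exactly $\frac{1}{2}\|A^2+B^2\|\le\frac{\sqrt2}{2}\omega(A^2+iB^2)$, which is the first inequality.

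For the second estimate the point is to bound $\omega(A^2+iB^2)$ from above by $\omega^2(T)$. Fix a unit vector $x$ and set $a=\langle A^2x,x\rangle\ge0$ and $b=\langle B^2x,x\rangle\ge0$; then $|\langle(A^2+iB^2)x,x\rangle|=\sqrt{a^2+b^2}\le\sqrt2\,\max\{a,b\}$. Since $a=\|Ax\|^2\le\|A\|^2$ and $b=\|Bx\|^2\le\|B\|^2$, and since $A,B$ are self-adjoint so that $\|A\|=\omega(A)$ and $\|B\|=\omega(B)$, it remains to compare $\omega(A)$ and $\omega(B)$ with $\omega(T)$. This follows from $|\langle Ax,x\rangle|\le|\langle Ax,x\rangle+i\langle Bx,x\rangle|=|\langle Tx,x\rangle|$, the two inner products being real, whence $\omega(A)\le\omega(T)$ and likewise $\omega(B)\le\omega(T)$. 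Thus $\max\{a,b\}\le\omega^2(T)$, so $|\langle(A^2+iB^2)x,x\rangle|\le\sqrt2\,\omega^2(T)$; taking the supremum over unit $x$ gives $\omega(A^2+iB^2)\le\sqrt2\,\omega^2(T)$, and multiplying by $\frac{\sqrt2}{2}$ produces $\frac{\sqrt2}{2}\omega(A^2+iB^2)\le\omega^2(T)$.

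The step I expect to require the most care is the upper bound in the last paragraph. The naive route, bounding $\omega(A^2+iB^2)\le\omega(A^2)+\omega(B^2)=\|A\|^2+\|B\|^2\le2\omega^2(T)$ by the triangle inequality, only yields $\frac{\sqrt2}{2}\omega(A^2+iB^2)\le\sqrt2\,\omega^2(T)$, which is too weak by a factor of $\sqrt2$. Replacing the triangle inequality by the sharper scalar bound $\sqrt{a^2+b^2}\le\sqrt2\,\max\{a,b\}$, together with the individually tight estimates $\max\{\|A\|^2,\|B\|^2\}\le\omega^2(T)$, is precisely what recovers the stated constant.
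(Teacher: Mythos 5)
Your proof is correct, and its second half takes a genuinely different route from the paper's. For the first inequality you and the paper do essentially the same thing: reduce via the identity $|T|^2+|T^*|^2=2(A^2+B^2)$ and apply Corollary \ref{3} to the positive operators $A^2,B^2$ (you cite the ``in particular'' estimate \eqref{4}, the paper cites the main statement together with $\omega(A^2+B^2)=\|A^2+B^2\|$; these are interchangeable, though since the paper defines accretive-dissipative through \emph{strict} positivity and $A^2,B^2$ can have kernels, it is slightly cleaner to invoke the main statement for the normal operators $A^2,B^2$, as the paper does). The genuine divergence is in the upper bound $\frac{\sqrt{2}}{2}\omega(A^2+iB^2)\le \omega^2(T)$: the paper routes it through Proposition \ref{5} and the norm triangle inequality, namely $\omega(A^2+iB^2)\le \sqrt{\|A^4+B^4\|}\le\sqrt{\|A\|^4+\|B\|^4}\le\sqrt{2}\,\omega^2(T)$, leaving the final step (which needs $\|A\|=\omega(A)\le\omega(T)$ and $\|B\|=\omega(B)\le\omega(T)$) implicit; you instead bound the numerical radius directly at the level of inner products via $|\langle(A^2+iB^2)x,x\rangle|=\sqrt{a^2+b^2}\le\sqrt{2}\max\{a,b\}\le\sqrt{2}\,\omega^2(T)$. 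Both arguments rest on the same two ingredients---the facts $\|A\|=\omega(A)\le\omega(T)$, $\|B\|=\omega(B)\le\omega(T)$ and a scalar estimate strictly sharper than the naive triangle bound---but yours is more elementary and self-contained (it never uses Proposition \ref{5}) and makes explicit the step the paper glosses over, while the paper's is shorter given that Proposition \ref{5} is already available. Your closing remark that the plain triangle inequality $\omega(A^2+iB^2)\le\|A\|^2+\|B\|^2\le 2\omega^2(T)$ loses exactly a factor $\sqrt{2}$ is accurate, and it is precisely why some sharper tool (Proposition \ref{5} in the paper, or your $\max$ estimate) is unavoidable at this point.
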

\begin{proof}
In Corollary \ref{3}, replace $A$ and $B$ by ${{A}^{2}}$ and ${{B}^{2}}$. This implies
\[\begin{aligned}
   \left\| {{A}^{2}}+{{B}^{2}} \right\|&\le \sqrt{2}\omega \left( {{A}^{2}}+i{{B}^{2}} \right) \\ 
 & \le \sqrt{2\left\| {{A}^{4}}+{{B}^{4}} \right\|} \quad \text{(by Proposition \ref{5})}\\ 
 & \le \sqrt{2\left( {{\left\| A \right\|}^{4}}+{{\left\| B \right\|}^{4}} \right)},  
\end{aligned}\]
where the last inequality follows by the triangle inequality for the usual operator norm.
If $T=A+iB$ is the Cartesian decomposition of the operator $T$, then
\[\frac{1}{2}\left\| {{\left| T \right|}^{2}}+{{\left| {{T}^{*}} \right|}^{2}} \right\|=\left\| {{A}^{2}}+{{B}^{2}} \right\|\text{ and }\sqrt{2\left( {{\left\| A \right\|}^{4}}+{{\left\| B \right\|}^{4}} \right)}\le 2{{\omega }^{2}}\left( T \right).\]
Therefore
\[\frac{1}{4}\left\| {{\left| T \right|}^{2}}+{{\left| {{T}^{*}} \right|}^{2}} \right\|\le \frac{\sqrt{2}}{2}\omega \left( {{A}^{2}}+i{{B}^{2}} \right)\le {{\omega }^{2}}\left( T \right),\]
which is equivalent to the desired result.
\end{proof}

On the other hand, manipulating Proposition \ref{5} implies the following refinement of the triangle inequality $$\|A+B\|\leq \|A\|+\|B\|.$$ The connection of this result to our analysis is the refining term $\omega(A+iB)$.

\begin{theorem}\label{0}
Let $A,B\in \mathcal{B}\left( \mathcal{H} \right)$ be two self-adjoint operators. Then
\[\left\| A+B \right\|\le \sqrt{{{\omega }^{2}}\left( A+iB \right)+2\left\| A \right\|\left\| B \right\|}\le \left\| A \right\|+\left\| B \right\|.\]
\end{theorem}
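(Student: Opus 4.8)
The plan is to work pointwise with inner products and to exploit the crucial observation that $A+B$ is itself self-adjoint, hence normal, so that $\|A+B\|=\omega(A+B)$. This reduces the left inequality to estimating $|\langle(A+B)x,x\rangle|$ for unit vectors $x$. Writing $a=\langle Ax,x\rangle$ and $b=\langle Bx,x\rangle$, both of which are real because $A$ and $B$ are self-adjoint, the elementary identity $|a+b|^2=|a+ib|^2+2ab$ separates the term that produces $\omega(A+iB)$ from the cross term $2ab$. Rather than discarding $2ab$, as one does to obtain the scalar inequality \eqref{1}, I would retain it.

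For the left inequality, I would bound $|a+ib|=|\langle(A+iB)x,x\rangle|\le\omega(A+iB)$ and $2ab\le 2|\langle Ax,x\rangle|\,|\langle Bx,x\rangle|\le 2\|A\|\,\|B\|$, the last step using $|\langle Ax,x\rangle|\le\|A\|$ and similarly for $B$. Combining these gives $|\langle(A+B)x,x\rangle|^2\le\omega^2(A+iB)+2\|A\|\,\|B\|$ for every unit vector $x$; taking the supremum over such $x$ and invoking $\omega(A+B)=\|A+B\|$ yields $\|A+B\|^2\le\omega^2(A+iB)+2\|A\|\,\|B\|$, which is the first asserted inequality after taking square roots.

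For the right inequality, it suffices to show $\omega^2(A+iB)\le\|A\|^2+\|B\|^2$, since then $\omega^2(A+iB)+2\|A\|\,\|B\|\le(\|A\|+\|B\|)^2$. Here I would appeal directly to Proposition \ref{5}, which gives $\omega^2(A+iB)\le\|A^2+B^2\|$, followed by the triangle inequality $\|A^2+B^2\|\le\|A^2\|+\|B^2\|$ and the identity $\|A^2\|=\|A\|^2$ valid for self-adjoint operators.

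The main subtlety is the identification $\|A+B\|=\omega(A+B)$, which is what lets the otherwise one-sided quantity $\|A+B\|$ be treated by the pointwise numerical-radius machinery; once that is in place, the argument is a matter of keeping, rather than throwing away, the cross term in the expansion of $|a+b|^2$.
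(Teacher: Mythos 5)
Your proof is correct and follows essentially the same route as the paper: expanding $|\langle(A+B)x,x\rangle|^2$ into $|\langle(A+iB)x,x\rangle|^2$ plus the cross term, bounding the cross term by $2\|A\|\,\|B\|$, and settling the right-hand inequality via Proposition \ref{5} together with $\|A^2\|=\|A\|^2$. The only cosmetic difference is that you make explicit the identification $\|A+B\|=\omega(A+B)$ for the self-adjoint operator $A+B$, which the paper uses implicitly when passing to the supremum.
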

\begin{proof}
Let $x\in\mathcal{H}$ be a unit vector. Then
\[\begin{aligned}
   {{\left| \left\langle \left( A+B \right)x,x \right\rangle  \right|}^{2}}&\le {{\left( \left| \left\langle Ax,x \right\rangle  \right|+\left| \left\langle Bx,x \right\rangle  \right| \right)}^{2}} \\ 
 & ={{\left| \left\langle Ax,x \right\rangle  \right|}^{2}}+{{\left| \left\langle Bx,x \right\rangle  \right|}^{2}}+2\left| \left\langle Ax,x \right\rangle  \right|\left| \left\langle Bx,x \right\rangle  \right| \\ 
 & ={{\left| \left\langle \left( A+iB \right)x,x \right\rangle  \right|}^{2}}+2\left| \left\langle Ax,x \right\rangle  \right|\left| \left\langle Bx,x \right\rangle  \right|.  
\end{aligned}\]
Therefore,
\[{{\left\| A+B \right\|}^{2}}\le {{\omega }^{2}}\left( A+iB \right)+2\left\| A \right\|\left\| B \right\|.\]
On the other hand, noting Proposition \ref{5},
\[\begin{aligned}
   {{\omega }^{2}}\left( A+iB \right)+2\left\| A \right\|\left\| B \right\| & \le \left\| {{ A }^{2}}+{{ B }^{2}} \right\|+2\left\| A \right\|\left\| B \right\| \\ 
 & \le {{\left\| A \right\|}^{2}}+{{\left\| B \right\|}^{2}}+2\left\| A \right\|\left\| B \right\| \\ 
 & \le {{\left( \left\| A \right\|+\left\| B \right\| \right)}^{2}}.  
\end{aligned}\]
Therefore,
\[\left\| A+B \right\|\le \sqrt{{{\omega }^{2}}\left( A+iB \right)+2\left\| A \right\|\left\| B \right\|}\le \left\| A \right\|+\left\| B \right\|.\]
This completes the proof. 
\end{proof}

{\tiny \vskip 0.3 true cm }

{\tiny (H. R. Moradi) Department of Mathematics, Payame Noor University (PNU), P.O. Box 19395-4697, Tehran, Iran.}

{\tiny \textit{E-mail address:} hrmoradi@mshdiau.ac.ir }

{\tiny \vskip 0.3 true cm }

{\tiny (M. Sababheh) Department of Basic Sciences, Princess Sumaya University For Technology, Al Jubaiha, Amman 11941, Jordan.}

{\tiny \textit{E-mail address:} sababheh@psut.edu.jo}

%--------------------------------------------------------
\end{document}